\numberwithin{equation}{section}
\numberwithin{figure}{section}
\theoremstyle{plain}
\newtheorem{thm}{\protect\theoremname}[section]
  \theoremstyle{definition}
  \newtheorem{defn}[thm]{\protect\definitionname}
  \theoremstyle{remark}
  \newtheorem{rem}[thm]{\protect\remarkname}
  \theoremstyle{plain}
  \newtheorem{lem}[thm]{\protect\lemmaname}
  \theoremstyle{plain}
  \newtheorem{cor}[thm]{\protect\corollaryname}
\def\quot{/\!\!/}
\def\sym{\mathrm{Sym}}
\def\hom{\mathsf{Hom}}
\title[Hodge-Deligne polynomials of symmetric products of algebraic groups]{Hodge-Deligne polynomials of symmetric products of algebraic groups}
\author[J. Silva]{Jaime D. Silva}
\address{Departamento Matem\'{a}tica, FCUP,  Rua do Campo Alegre s/n, 4169-007 Porto, Portugal}
\email{jaime.silva@fc.up.pt}
\thanks{This work was supported by the project PTDC/MAT-GEO/2823/2014}
\subjclass[2000]{14L30, 20E05}
\keywords{mixed Hodge structures, 
Hodge-Deligne polynomials, equivariant E-polynomials, finite quotients, symmetric products, linear algebraic groups}
  \providecommand{\corollaryname}{Corollary}
  \providecommand{\definitionname}{Definition}
  \providecommand{\lemmaname}{Lemma}
  \providecommand{\remarkname}{Remark}
\providecommand{\theoremname}{Theorem}
\begin{document}
\begin{abstract}
Let $X$ be a complex quasi-projective algebraic variety. In this
paper we study the mixed Hodge structures of the symmetric products
$\sym^{n}X$ when the cohomology of $X$ is given by exterior products
of cohomology classes with odd degree. We obtain an expression for
the equivariant mixed Hodge polynomials $\mu_{X^{n}}^{S_{n}}\left(t,u,v\right)$,
codifying the permutation action of $S_{n}$ as well as its subgroups.
This allows us to deduce formulas for the mixed Hodge polynomials
of its symmetric products $\mu_{\sym^{n}X}\left(t,u,v\right)$. These
formulas are then applied to the case of linear algebraic groups.
\end{abstract}

\maketitle

\section{Introduction}

\thispagestyle{empty}

Given a topological space $X$, its $n$-fold symmetric product is
given by identifying in its $n$-fold cartesian product $X^{n}$ those
tuples that can be obtained from each other by permuting the entries.
More concretely, it is given by the finite quotient
\begin{eqnarray*}
\mbox{Sym}^{n}X & \coloneqq & X^{n}/S_{n},
\end{eqnarray*}
where $S_{n}$ is the symmetric group on $n$ letters that acts on
$X^{n}$ by permutation. When $X$ is a smooth complex algebraic curve,
its symmetric products are also smooth algebraic varieties \cite{ST}%
. For example, when $X$ is a compact Riemann surface $C$ of genus
$g$, $\mbox{Sym}^{g}C$ is birationally equivalent to the Jacobian
$J$ of $C$. Moreover, for $n>2g-2$ the symmetric product $\mbox{Sym}^{n}C$
is a projective fiber bundle over $J$ - see \cite{Mac2}. If one
assumes that $\dim_{\mathbb{C}}X>1$, then $\mbox{Sym}^{n}X$ are
no longer smooth but they are still quasi-projective algebraic varieties
- see \cite{Mum}. 

The cohomology of a complex quasi-projective algebraic variety $X$
is endowed with a natural mixed Hodge structure \cite{De2}. The mixed
Hodge numbers of $\sym^{n}X$ can be obtained from a formula of J.
Cheah \cite{Ch}, that generalized the work of I. G. Macdonald on
their Poincaré polynomials \cite{Mac}. 

In this paper, we provide an alternative approach to the problem of
determining the mixed Hodge polynomials of symmetric products of certain
classes of algebraic varieties whose cohomologies are exterior algebras
in a certain sense described below.

Mixed Hodge structures of $X$ define a triply-graded structure 
\[
H^{*}(X,\mathbb{C})=\bigoplus_{k,p,q}H^{k;p,q}\left(X,\mathbb{C}\right),
\]
satisfying the duality $H^{k;p,q}\left(X,\mathbb{C}\right)\cong\overline{H^{k;q,p}\left(X,\mathbb{C}\right)}$.
Fix $m\in\mathbb{N}$ and $\left(r_{1},\cdots,r_{m}\right)\in\mathbb{N}^{m}$.
We say that the cohomology of $X$ is an exterior algebra generated
in odd degree if there are classes $\omega_{j}^{i}\in H^{d_{i};p_{i},q_{i}}\left(X,\mathbb{C}\right)$,
for $i=1,\cdots,m$ and $j\in\{1,\cdots,r_{i}\}$, with $d_{i}\in2\mathbb{N}-1$,
$p_{i},q_{i}\in\mathbb{N}$, such that

\begin{eqnarray}
H^{*;*,*}\left(X,\mathbb{C}\right) & = & \bigwedge\left\langle \omega_{j}^{i}\right\rangle _{i,j=1}^{m,r_{i}}.\label{eq:exterior}
\end{eqnarray}
The class of varieties whose cohomology is of this form includes all
linear algebraic groups and abelian varieties. 

Our strategy follows by considering equivariant mixed Hodge polynomials
$\mu_{X}^{G}\left(t,u,v\right)$, codifying the triply-graded $G$-module
$\left[H^{*;*,*}\left(X,\mathbb{C}\right)\right]_{G}$ associated
to an algebraic action of a finite group $G$ on $X$. This gives
the main result of the paper. 
\begin{thm}
\label{thm:main} Let $X$ be a complex quasi-projective variety whose
cohomology is an exterior algebra generated in odd degree, of the
form in \eqref{eq:exterior}. Then the equivariant mixed Hodge polynomial
for the natural $S_{n}$ action on $X^{n}$ is given by
\begin{eqnarray*}
\mu_{X^{n}}^{S_{n}}\left(t,u,v\right) & = & \bigotimes_{i=1}^{m}\left[\sum_{k=0}^{n-1}\bigwedge^{k}\mbox{St}\left(\left(t^{d_{i}}u^{p_{i}}v^{q_{i}}\right)^{k}+\left(t^{d_{i}}u^{p_{i}}v^{q_{i}}\right)^{k+1}\right)\right]^{\varotimes r_{i}}
\end{eqnarray*}
where $\mbox{St}$ is the standard representation.
\end{thm}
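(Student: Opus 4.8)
The plan is to reduce the computation to the super-linear structure of $H^{*;*,*}(X,\mathbb{C})$ and then exploit multiplicativity of the equivariant polynomial under the Künneth formula. Write $V=\bigoplus_{i=1}^{m}V_i$ with $V_i=\langle\omega_1^i,\dots,\omega_{r_i}^i\rangle$ concentrated in odd cohomological degree $d_i$ and Hodge bidegree $(p_i,q_i)$, so that hypothesis \eqref{eq:exterior} reads $H^{*;*,*}(X,\mathbb{C})=\bigwedge V$. First I would invoke the Künneth isomorphism for mixed Hodge structures to identify $H^{*;*,*}(X^n,\mathbb{C})$ with $(\bigwedge V)^{\otimes n}$, this isomorphism being $S_n$-equivariant once $S_n$ acts on the right-hand side as the symmetry of the graded tensor product, so that Koszul signs appear whenever odd classes in distinct factors are transposed. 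Since total cohomological degree and both Hodge degrees add across tensor factors, the variables $t,u,v$ behave multiplicatively and $\mu^{S_n}_{X^n}$ is the product of the contributions of the individual factors.

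The key algebraic step is the super-vector-space identity $(\bigwedge V)^{\otimes n}\cong\bigwedge(V\otimes\mathbb{C}^n)$, where $\mathbb{C}^n$ is the permutation representation and $S_n$ acts by $\mathrm{id}_V\otimes(\text{permutation})$ extended functorially to the exterior algebra. Separating degrees gives $\bigwedge(V\otimes\mathbb{C}^n)\cong\bigotimes_{i=1}^{m}(\bigwedge\mathbb{C}^n_i)^{\otimes r_i}$, where $\mathbb{C}^n_i$ denotes $\mathbb{C}^n$ placed in degree $d_i$ and Hodge type $(p_i,q_i)$; this already accounts for the outer $\bigotimes_i$ and the $r_i$-th tensor powers in the statement. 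The heart of the matter is to verify that the $k$-th graded piece of $\bigwedge\mathbb{C}^n_i$ is, as an $S_n$-representation, exactly the exterior power $\bigwedge^k\mathbb{C}^n$ of the permutation representation. I expect this to be the main obstacle: one must check that the Koszul sign incurred when $\sigma\in S_n$ permutes odd classes in distinct factors coincides with the sign by which $\sigma$ acts on $e_{a_1}\wedge\cdots\wedge e_{a_k}\in\bigwedge^k\mathbb{C}^n$. It is precisely the oddness of the $d_i$ that turns what would be symmetric powers (in the even case) into exterior powers, and the dimension count $\binom{n}{k}$ versus $\binom{n+k-1}{k}$ serves as a sanity check.

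Granting this, the contribution of a single odd generator of weight $w:=t^{d_i}u^{p_i}v^{q_i}$ is the generating function $\sum_{k=0}^{n}[\bigwedge^k\mathbb{C}^n]\,w^k$ in $R(S_n)[t,u,v]$. I would then decompose the permutation representation as $\mathbb{C}^n\cong\mathbf{1}\oplus\mathrm{St}$ and apply the elementary identity $\bigwedge^k(\mathbf{1}\oplus\mathrm{St})\cong\bigwedge^k\mathrm{St}\oplus\bigwedge^{k-1}\mathrm{St}$. Substituting and reindexing, using $\bigwedge^k\mathrm{St}=0$ for $k>n-1$, collapses the sum to $\sum_{k=0}^{n-1}\bigwedge^k\mathrm{St}\,(w^k+w^{k+1})$, which is exactly the single-generator factor of the theorem. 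Assembling the $r_i$ generators of each type via the internal tensor product in $R(S_n)$ and taking the product over $i=1,\dots,m$---legitimate by the multiplicativity recorded above---produces the claimed formula. The only remaining bookkeeping is to confirm that the grading by cohomological degree and the bigrading by Hodge type are respected throughout, which holds because each $\omega_j^i$ carries the fixed tridegree $(d_i;p_i,q_i)$ and degrees add under the wedge product.
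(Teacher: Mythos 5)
Your proposal is correct and follows essentially the same route as the paper: Künneth, the identification $(\bigwedge V)^{\otimes n}\cong\bigwedge(V\otimes\mathbb{C}^{n})$ with graded pieces given by exterior powers of the permutation representation $\rho_{S_n}$, then the splitting $\rho_{S_n}=\mathbf{1}\oplus\mathrm{St}$ together with $\bigwedge^{k}(\mathbf{1}\oplus\mathrm{St})\cong\bigwedge^{k}\mathrm{St}\oplus\bigwedge^{k-1}\mathrm{St}$. If anything, you are more careful than the paper on the two delicate points --- the verification that Koszul signs from the odd generators reproduce exactly the $\bigwedge^{k}\mathbb{C}^{n}$ action, and the truncation of the sum at $k=n-1$ via the vanishing $\bigwedge^{k}\mathrm{St}=0$ for $k>n-1$ --- both of which the paper passes over silently.
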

With this Theorem, the deduction of the mixed Hodge polynomial of
$\mu_{\sym^{n}X}\left(t,u,v\right)$ follows from the representation
theory of the symmetric group $S_{n}$. 
\begin{thm}
\label{thm:det-formula}Let $X$ be a complex quasi-projective variety
whose cohomology is an exterior algebra generated in odd degree, as
in \eqref{eq:exterior}. Then, 
\begin{eqnarray*}
\mu_{\sym^{n}X}\left(t,u,v\right) & = & \frac{1}{n!}\sum_{\alpha\in S_{n}}\prod_{i=1}^{m}\det\left(I+t^{d_{i}}u^{p_{i}}v^{q_{i}}M_{\alpha}\right)^{r_{i}}
\end{eqnarray*}
for $M_{\alpha}$ the permutation matrix associated to $\alpha$. 
\end{thm}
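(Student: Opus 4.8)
The plan is to recover the mixed Hodge polynomial of the quotient $\sym^{n}X = X^{n}/S_{n}$ from the equivariant polynomial of Theorem \ref{thm:main} by extracting the $S_{n}$-invariant part. Since $S_{n}$ is finite and we work with complex (equivalently rational) coefficients, the cohomology of the quotient is the invariant subspace $H^{*}(X^{n},\mathbb{C})^{S_{n}}$, and this identification respects the mixed Hodge structure. Consequently the mixed Hodge polynomial of $\sym^{n}X$ is obtained from $\mu_{X^{n}}^{S_{n}}(t,u,v)$ by applying the invariants functor to each graded piece, i.e.\ $\mu_{\sym^{n}X}(t,u,v) = \left[\mu_{X^{n}}^{S_{n}}(t,u,v)\right]^{S_{n}}$.

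Next I would replace the invariants functor by character averaging. For any finite-dimensional $S_{n}$-representation $V$ one has $\dim V^{S_{n}} = \frac{1}{n!}\sum_{\alpha\in S_{n}}\chi_{V}(\alpha)$, so taking invariants of the triply-graded, monomial-weighted virtual representation in Theorem \ref{thm:main} amounts to averaging its character over the group. Thus it suffices to compute $\chi(\alpha)$ for the representation
\[
\bigotimes_{i=1}^{m}\left[\sum_{k=0}^{n-1}\bigwedge\nolimits^{k}\mathrm{St}\left(w_{i}^{k}+w_{i}^{k+1}\right)\right]^{\otimes r_{i}},\qquad w_{i}:=t^{d_{i}}u^{p_{i}}v^{q_{i}},
\]
and then average.

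The computational heart is identifying the inner bracket with an exterior-algebra generating function for the full permutation representation. Writing $\mathrm{Perm}=\mathrm{St}\oplus\mathrm{triv}$ for the $n$-dimensional permutation representation, the decomposition $\bigwedge^{k}\mathrm{Perm}\cong\bigwedge^{k}\mathrm{St}\oplus\bigwedge^{k-1}\mathrm{St}$ gives
\[
\sum_{k\geq0}\bigwedge\nolimits^{k}\mathrm{Perm}\cdot w_{i}^{k}=\sum_{k\geq0}\bigwedge\nolimits^{k}\mathrm{St}\left(w_{i}^{k}+w_{i}^{k+1}\right),
\]
which is exactly the inner bracket, the sum truncating at $k=n-1$ since $\dim\mathrm{St}=n-1$. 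On characters, $\sum_{k}\chi_{\bigwedge^{k}\mathrm{Perm}}(\alpha)\,w_{i}^{k}=\det\left(I+w_{i}M_{\alpha}\right)$, because $M_{\alpha}$ is the matrix of $\alpha$ on $\mathrm{Perm}$ and the generating function of the elementary symmetric functions of its eigenvalues is this determinant. Multiplicativity of the character under tensor products then yields $\chi(\alpha)=\prod_{i=1}^{m}\det\left(I+t^{d_{i}}u^{p_{i}}v^{q_{i}}M_{\alpha}\right)^{r_{i}}$, and averaging over $S_{n}$ produces the claimed formula.

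The main obstacle I anticipate is bookkeeping rather than conceptual: one must check that the monomial weights $w_{i}$ interact correctly with the tensor and exterior constructions so that character multiplicativity applies grading by grading, and that the invariants-equals-averaging step remains valid for the Hodge-weighted characters and not merely for ordinary representations. Once the inner bracket is recognized as $\sum_{k}\bigwedge^{k}\mathrm{Perm}\cdot w_{i}^{k}$, the determinant identity is immediate from the theory of symmetric functions.
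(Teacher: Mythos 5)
Your proposal is correct and follows essentially the same route as the paper: both pass through the isomorphism $H^{*}(\sym^{n}X,\mathbb{C})\cong H^{*}(X^{n},\mathbb{C})^{S_{n}}$, identify the bracket in the equivariant polynomial with $\sum_{k}\bigwedge^{k}(\mathrm{St}\oplus\mathrm{triv})\,w_{i}^{k}$, and use the generating identity $\det(I+w_{i}M_{\alpha})=\sum_{k}\chi_{\bigwedge^{k}(\mathrm{St}\oplus\mathrm{triv})}(\alpha)\,w_{i}^{k}$ together with multiplicativity of characters. The only cosmetic difference is that you extract the trivial isotypic coefficient by the standard averaging formula $\dim V^{S_{n}}=\frac{1}{n!}\sum_{\alpha}\chi_{V}(\alpha)$, whereas the paper packages the same computation as a sum over conjugacy classes weighted by class sizes via Schur orthogonality (its Lemma \ref{lem:diaglem}); the two are identical term by term.
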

To give an example, in the case of a complex torus $\mathbb{T}_{d}$
of dimension $d$, we get the formula

\begin{eqnarray}
\mu_{\sym^{n}\mathbb{T}_{d}}\left(t,u,v\right) & = & \frac{1}{n!}\sum_{\alpha\in S_{n}}\det\left(I+tuM_{\alpha}\right)^{d}\det\left(I+tvM_{\alpha}\right)^{d}\label{eq:ellmhp}
\end{eqnarray}
where $M_{\alpha}$ is the matrix of permutation of $\alpha\in S_{n}$. 

The above formula also gives the Poincaré polynomial by setting $u=v=1$.
In particular, for any algebraic variety as in Theorem \ref{thm:det-formula},
we get
\[
P_{\sym^{n}X}\left(t\right)=\frac{1}{n!}\sum_{\alpha\in S_{n}}\prod_{i=1}^{m}\det\left(I+t^{d_{i}}M_{\alpha}\right)^{r_{i}}.
\]

Formulas of this type are also important in the theory of character
varieties, which are algebraic varieties of the form $\mathcal{X}_{\Gamma}G:=\hom(\Gamma,G)\quot G$
for a finitely presented group $\Gamma$ and a complex reductive group
$G$. In the special case of $\Gamma=\mathbb{Z}^{r}$, the Poincaré
polynomials and, more generally, the mixed Hodge polynomials of $\mathcal{X}_{\Gamma}G$
were computed in \cite{St} and in \cite{FS}; for example in the
case $G=GL(n,\mathbb{C})$, they correspond to setting $d_{i}=p_{i}=q_{i}=1$. 

The formulas obtained here for mixed Hodge polynomials agree with
Cheah's formula (see Remark \ref{rem:frCh}), which provides the generating
series of the compactly supported mixed Hodge polynomials of $\sym^{n}X$,
\begin{eqnarray}
\sum_{n\geq0}\,\mu_{\sym^{n}X}^{c}(t,u,v)\,z^{n} & = & \prod_{p,q,k}\left(1-(-1)^{k}u^{p}v^{q}t{}^{k}z\right)^{(-1)^{k+1}h_{c}^{k,p,q}(X)}\label{eq:Cheah-2}
\end{eqnarray}
yielding a similar formula for the usual mixed Hodge numbers when
Poincaré duality applies. This formula can then be used to recover
$\mu_{\sym^{n}X}^{c}$, being given by the coefficient of $z^{n}$
in the right-hand side. 

We now outline the contents of the article. In Section 2, we prove
the main results (Theorem \ref{thm:equipol} and Theorem \ref{thm:MHP}).
We start by deducing the equivariant mixed Hodge polynomial by analyzing
the induced action of $S_{n}$ on $H^{*}\left(X^{n},\mathbb{C}\right)$.
Afterwards, by some simple considerations involving only the Schur
orthogonality relations, we deduce a general formula for $\mu_{\sym^{n}X}$.
In Section 3, we apply the results in Section 2 to several families
of examples. Most important among those are linear algebraic groups,
that motivated this study. In section 3.3, we also consider the case
of real topological Lie groups. Finally, in Section 4 we compare our
result with the above formula of J. Cheah, leading to some interesting
combinatorial identities (Theorem \ref{thm:combgl}) generalizing
those of \cite[Section 5.5]{FS}.

\section{Equivariant Polynomials of Permutation Actions}

Let $X$ be a complex quasi-projective variety and $S_{n}$ the symmetric
group on $n$ letters acting on $X^{n}$ by permutation. In this Section,
we explore the induced action $S_{n}\curvearrowright H^{k;p,q}\left(X^{n},\mathbb{C}\right)$
when the cohomology of $X$ is an exterior algebra generated in odd
degree. We will do so by determining the equivariant mixed Hodge polynomial
$\mu_{X^{n}}^{S_{n}}$. From it, we will be able to deduce a general
formula for $\mu_{\sym^{n}X}$ by a simple calculation on characters.
We start by giving an overview on mixed Hodge structures and their
relations to actions of finite groups.

\subsection{Equivariant mixed Hodge structures for finite group actions.}

The cohomology of a complex quasi-projective algebraic variety $X$
is endowed with a mixed Hodge structure \cite{De1,De2}. Briefly,
its cohomology $H^{*}\left(X,\mathbb{C}\right)$ admits two natural
filtrations: an increasing filtration, called the weight filtration
$W^{*}$, that can be defined over the rationals $H^{*}\left(G,\mathbb{Q}\right)$,
and a decreasing filtration, denoted $F_{*}$, generalizing the Hodge
filtration of smooth projective varieties. The name of these structures
is motivated from the fact that the Hodge filtration $F_{*}$ induces
a pure Hodge structure on the graded pieces of the weight filtration.
This leads to a bi-graduation of the cohomology ring, whose pieces
are called mixed Hodge components and are denoted by 
\begin{eqnarray*}
H^{k;p,q}\left(X,\mathbb{C}\right) & \coloneqq & Gr_{F}^{p}Gr_{p+q}^{W_{\mathbb{C}}}H^{k}\left(X,\mathbb{C}\right).
\end{eqnarray*}
Since the Hodge filtration induces a pure Hodge structure on $Gr_{p+q}^{W_{\mathbb{C}}}H^{k}\left(X,\mathbb{C}\right)$,
these pieces satisfy the duality $H^{k;p,q}\left(X,\mathbb{C}\right)\cong\overline{H^{k;q,p}\left(X,\mathbb{C}\right)}$.
Their dimensions are called \textit{mixed Hodge numbers} and are denoted
by 
\begin{eqnarray*}
h^{k;p,q}\left(X\right) & \coloneqq & \dim H^{k;p,q}\left(X,\mathbb{C}\right).
\end{eqnarray*}
These numbers are usually codified as a polynomial in three variables
\begin{eqnarray*}
\mu_{X}\left(t,u,v\right) & \coloneqq & \sum_{k,p,q}h^{k;p,q}\left(X\right)t^{k}u^{p}v^{q}
\end{eqnarray*}
known as \textit{mixed Hodge polynomial (MHP) or Hodge-Deligne polynomial
of $X$}. We recall that mixed Hodge structures also exist in the
compactly supported cohomology. Moreover, when $X$ is smooth (or
an orbifold) Poincaré duality is compatible with mixed Hodge structures,
so 
\begin{eqnarray*}
h^{k;p,q}\left(X\right) & = & h_{c}^{2d-k;k-p,k-q}\left(X\right)
\end{eqnarray*}
where $d=\dim_{\mathbb{C}}X$. In here and below the subscript (or
superscript, for the Hodge polynomials) $c$ means we are referring
to the compactly supported cohomology. Also important will be the
$E$ -polynomial, given by $\mu_{X}\left(-1,u,v\right)$ for the usual
cohomology and $\mu_{X}^{c}\left(-1,u,v\right)$ for the compactly
supported version 
\begin{eqnarray*}
E_{X}^{c}\left(u,v\right) & \coloneqq & \sum_{k,p,q}\left(-1\right)^{k}h_{c}^{k;p,q}\left(X\right)u^{p}v^{q}.
\end{eqnarray*}
Although this polynomial codifies less information than $\mu_{X}$,
it satisfies some very nice properties. Being an example of a motivic
measure on the category of complex quasi-projective varities, the
E-polynomial for the compactly supported cohomology is additive for
locally closed stratifications. In certain contexts, it also provides
a link with arithmetic geometry. An important result of this nature
is one by N. Katz in the Appendix of \cite{HRV}. Also, the E-polynomial
is also multiplicative for fibrations with trivial monodromy (see
\cite{LMN}). For some type of varieties, this polynomial is enough
to recover the mixed Hodge polynomial. This is the case of smooth
projective varieties, but also of separably pure varieties - a family
of varieties studied in \cite{DiLe}, and also essential in \cite{FS}. 

Now, let $X$ be a complex quasi-projective variety endowed with an
algebraic action of a finite group $G$. Since $G$ acts algebraically,
the induced action $G\curvearrowright H^{*}\left(X,\mathbb{C}\right)$
preserves mixed Hodge structures. Then the mixed Hodge components
are endowed with a $G$-module structure, that we denote by $\left[H^{k;p,q}\left(X,\mathbb{C}\right)\right]_{G}$.
This way, we can think of $H^{*;*,*}\left(X,\mathbb{C}\right)$ as
a triply graded $G$-module. Following a standard procedure, we codify
this action in a polynomial.
\begin{defn}
Let $X$ be a complex quasi-projective $G$-variety, for $G$ a finite
group. We define the \textit{equivariant mixed Hodge polynomial of
$G\curvearrowright X$} as 
\begin{eqnarray*}
\mu_{X}^{G}\left(t,u,v\right) & = & \sum_{k,p,q}\left[H^{k;p,q}\left(X,\mathbb{C}\right)\right]_{G}t^{k}u^{p}v^{q}.
\end{eqnarray*}

\end{defn}
The polynomials codifying actions of finite groups on the mixed Hodge
components were firstly introduced in \cite{DK}. The equivariant
mixed Hodge polynomial encodes all numerical information related to
this action. For instance, one can recover $\mu_{X}$ by taking 
\begin{eqnarray*}
\mu_{X}\left(t,u,v\right) & = & \dim_{\mathbb{C}}\mu_{X}^{G}\left(t,u,v\right).
\end{eqnarray*}
One can also recover the mixed Hodge polynomial of the quotient by
applying the isomorphism 
\begin{eqnarray}
H^{*;*,*}\left(X/G,\mathbb{C}\right) & \cong & H^{*;*,*}\left(X,\mathbb{C}\right)^{G}\label{eq:Groequ}
\end{eqnarray}
\cite{Gro}, that tells us $\mu_{X/G}$ equals the coefficient of
the trivial representation when $\mu_{X}^{G}$ is written in a irreducible
basis. By restricting the action to a subgroup $H\hookrightarrow G$,
we get 
\begin{eqnarray*}
\mu_{X}^{H}\left(t,u,v\right) & = & \mu_{X}^{G}\left(t,u,v\right)|_{H}\\
 & = & \sum_{k,p,q}\left[H^{k;p,q}\left(X,\mathbb{C}\right)\right]_{G}|_{H}t^{k}u^{p}v^{q}.
\end{eqnarray*}
Then we can also get information on the quotient by any subgroup.
For a more detailed account on mixed Hodge structures, see \cite{PS}.

\subsection{Permutation Actions}

Let $X$ be a complex quasi-projective variety and consider the permutation
action of $G=S_{n}$ on $X^{n}$. We are interested in the cases where
the cohomology of $X$ is an exterior algebra, as follows. 
\begin{defn}
\label{def:extalg}Fix $m\in\mathbb{N}$ and $\left(r_{1},\cdots,r_{m}\right)\in\mathbb{N}^{m}$.
We say that the cohomology of $X$ is an \textit{exterior algebra
generated in odd degree} if there are classes $\omega_{j}^{i}\in H^{d_{i};p_{i},q_{i}}\left(X,\mathbb{C}\right)$,
for $i=1,\cdots,m$ and $j\in\{1,\cdots,r_{i}\}$, with $d_{i}\in2\mathbb{N}-1$,
$p_{i},q_{i}\in\mathbb{N}_{0}$, such that
\begin{eqnarray*}
H^{*;*,*}\left(X,\mathbb{C}\right) & = & \bigwedge\left\langle \omega_{j}^{i}\right\rangle _{i,j=1}^{m,r_{i}}.
\end{eqnarray*}

\end{defn}
We will start by obtaining an expression for the equivariant mixed
Hodge polynomial $\mu_{X^{n}}^{S_{n}}$.
\begin{thm}
\label{thm:equipol}Let $X$ be a complex quasi-projective variety
whose cohomology is generared in odd degree. Assuming the conventions
in Definition \ref{def:extalg}, the equivariant mixed Hodge polynomial
for the natural $S_{n}$ action on $X^{n}$ is given by
\begin{eqnarray*}
\mu_{X^{n}}^{S_{n}}\left(t,u,v\right) & = & \bigotimes_{i=1}^{m}\left[\sum_{k=0}^{n-1}\bigwedge^{k}\mbox{St}\left(t^{d_{i}}u^{p_{i}}v^{q_{i}}\right)^{k}\left(1+t^{d_{i}}u^{p_{i}}v^{q_{i}}\right)\right]^{\varotimes r_{i}}
\end{eqnarray*}
where $\mbox{St}$ is the standard representation.\end{thm}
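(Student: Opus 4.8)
The plan is to compute the $S_{n}$-equivariant mixed Hodge structure on $H^{*}(X^{n},\mathbb{C})$ directly, using the K\"unneth isomorphism together with the exterior-algebra hypothesis. By the K\"unneth theorem, which is compatible with mixed Hodge structures, one has an isomorphism $H^{*;*,*}(X^{n},\mathbb{C})\cong H^{*;*,*}(X,\mathbb{C})^{\otimes n}$ of triply-graded vector spaces, under which the permutation action of $S_{n}$ on $X^{n}$ induces the graded-permutation action on the tensor factors; since cohomology is graded-commutative, this action carries the Koszul sign determined by the odd degrees being transposed. Writing $V:=\bigoplus_{i,j}\mathbb{C}\,\omega_{j}^{i}$ for the space spanned by the (odd-degree) generators, Definition \ref{def:extalg} gives $H^{*;*,*}(X,\mathbb{C})=\bigwedge V$, so the object to understand is $(\bigwedge V)^{\otimes n}$ as a triply-graded $S_{n}$-module.

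First I would establish the graded-algebra isomorphism $(\bigwedge V)^{\otimes n}\cong\bigwedge(V\otimes\mathbb{C}^{n})$, where $\mathbb{C}^{n}$ carries the permutation representation $\rho$ of $S_{n}$, and---this is the crux---check that the Koszul-signed permutation action on the left corresponds to the action induced by $\mathrm{id}_{V}\otimes\rho$ on the right. The point is that every generator has odd degree, so the sign incurred when a transposition swaps two copies of $V$ is exactly the sign produced by reordering the corresponding wedge factors; the Koszul signs are thereby absorbed into the exterior-algebra structure. I would verify this on decomposable elements, for which it suffices to treat generating transpositions and elements of the form $\omega\otimes\omega$. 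All of these identifications respect the triple grading, since both K\"unneth and the exterior-algebra splitting add mixed Hodge types.

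Next I would decompose $V=\bigoplus_{i=1}^{m}V_{i}$ by mixed Hodge type, where $\dim V_{i}=r_{i}$, every class of $V_{i}$ has type $(d_{i};p_{i},q_{i})$, and $S_{n}$ acts trivially on each $V_{i}$. This yields
\[
\bigwedge(V\otimes\mathbb{C}^{n})\;\cong\;\bigotimes_{i=1}^{m}\bigwedge(V_{i}\otimes\mathbb{C}^{n})\;\cong\;\bigotimes_{i=1}^{m}\Bigl(\bigwedge\mathbb{C}^{n}\Bigr)^{\otimes r_{i}},
\]
where in the $i$-th factor the summand $\bigwedge^{k}\mathbb{C}^{n}$ carries the monomial $(t^{d_{i}}u^{p_{i}}v^{q_{i}})^{k}$ together with the $S_{n}$-module structure of the $k$-th exterior power of the permutation representation. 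It then remains to rewrite a single factor, namely $\sum_{k=0}^{n}\bigl(\bigwedge^{k}\mathbb{C}^{n}\bigr)(t^{d_{i}}u^{p_{i}}v^{q_{i}})^{k}$, in terms of the standard representation.

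For that I would use $\mathbb{C}^{n}\cong\mathrm{triv}\oplus\mbox{St}$ and the exterior-power identity $\bigwedge^{k}(\mathrm{triv}\oplus\mbox{St})\cong\bigwedge^{k}\mbox{St}\oplus\bigwedge^{k-1}\mbox{St}$ (the higher exterior powers of a line vanish). Since $\bigwedge^{k}\mbox{St}=0$ for $k\geq n$, a short reindexing combines the two resulting sums into $\sum_{k=0}^{n-1}\bigwedge^{k}\mbox{St}\,(t^{d_{i}}u^{p_{i}}v^{q_{i}})^{k}(1+t^{d_{i}}u^{p_{i}}v^{q_{i}})$; assembling these factors over $i$ and taking the $r_{i}$-fold tensor powers gives the claimed expression. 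The step I expect to be the main obstacle is the sign bookkeeping of the second paragraph: upgrading the identification $(\bigwedge V)^{\otimes n}\cong\bigwedge(V\otimes\mathbb{C}^{n})$ to one of $S_{n}$-modules compatible with the Koszul signs and with the full triple grading. Once that is settled, the remainder is the representation theory of $S_{n}$ and an elementary reindexing.
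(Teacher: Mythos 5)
Your proposal is correct and follows essentially the same route as the paper: K\"unneth, then the identification $(\bigwedge V)^{\otimes n}\cong\bigwedge(V\otimes\mathbb{C}^{n})$ with the Koszul signs absorbed because all generators have odd degree, then $\mathbb{C}^{n}\cong\mathrm{triv}\oplus\mbox{St}$ together with $\bigwedge^{k}(\mathrm{triv}\oplus\mbox{St})\cong\bigwedge^{k}\mbox{St}\oplus\bigwedge^{k-1}\mbox{St}$ and a reindexing. If anything, your bookkeeping is slightly cleaner than the paper's (you correctly sum the exterior powers of the permutation representation up to $k=n$ and write the exterior algebra of a direct sum, where the paper's proof has minor off-by-one and $\bigotimes$-versus-$\bigoplus$ slips), so the plan stands as a complete and faithful argument.
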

\begin{proof}
We will start by a suitable description of the cohomology of $X^{n}$.
By the Künneth isomorphism
\begin{eqnarray*}
H^{*;*,*}\left(X^{n},\mathbb{C}\right) & \cong & \left[\bigwedge\left\langle \omega_{j}^{i}\right\rangle _{i,j=1}^{m,r_{i}}\right]^{\varotimes n}\\
 & \cong & \left[\bigotimes_{i=1}^{m}\bigwedge\left\langle \omega_{1}^{i},\cdots,\omega_{r_{i}}^{i}\right\rangle _{\mathbb{C}}\right]^{\varotimes n}\\
 & \cong & \bigotimes_{j=1}^{n}\bigotimes_{i=1}^{m}\bigwedge\left\langle \omega_{1}^{i,j},\cdots,\omega_{r_{i}}^{i,j}\right\rangle _{\mathbb{C}}\\
 & \cong & \bigotimes_{i=1}^{m}\bigotimes_{j=1}^{n}\bigwedge\left\langle \omega_{1}^{i,j},\cdots,\omega_{r_{i}}^{i,j}\right\rangle _{\mathbb{C}}
\end{eqnarray*}
where $\omega_{k}^{i,j}$ is the image of $\omega_{k}^{i}$ in the
$j$-th component of $X^{n}$. Moreover, $S_{n}$ acts on $\bigotimes_{j=1}^{n}\bigwedge\left\langle \omega_{1}^{i,j},\cdots,\omega_{r_{i}}^{i,j}\right\rangle _{\mathbb{C}}\cong\bigwedge\bigotimes_{j=1}^{n}\left\langle \omega_{1}^{i,j},\cdots,\omega_{r_{i}}^{i,j}\right\rangle _{\mathbb{C}}$
by permutation on $j$, so 
\begin{eqnarray*}
\mu_{X^{n}}^{S_{n}}\left(t,u,v\right) & = & \bigotimes_{i=1}^{m}\left[\sum_{k=0}^{n-1}\bigwedge^{k}\rho_{S_{n}}\left(t^{d_{i}}u^{p_{i}}v^{q_{i}}\right)^{k}\right]^{\varotimes r_{i}}
\end{eqnarray*}
where $\rho_{S_{n}}=\mbox{T}+\mbox{St}$, for $\mbox{T}$ the trivial
and $\mbox{St}$ the standard representations. Then 
\begin{eqnarray*}
\sum_{k=0}^{n-1}\bigwedge^{k}\rho_{S_{n}}t^{d_{i}}u^{p_{i}}v^{q_{i}} & = & \sum_{k=0}^{n-1}\bigwedge^{k}\left[\mbox{St}+\mbox{T}\right]\left(t^{d_{i}}u^{p_{i}}v^{q_{i}}\right)^{k}\\
 & = & \sum_{k=0}^{n-1}\left[\bigwedge^{k}\mbox{St}+\bigwedge^{k-1}\mbox{St}\right]\left(t^{d_{i}}u^{p_{i}}v^{q_{i}}\right)^{k}\\
 & = & \sum_{k=0}^{n-1}\bigwedge^{k}\mbox{St}\left(\left(t^{d_{i}}u^{p_{i}}v^{q_{i}}\right)^{k}+\left(t^{d_{i}}u^{p_{i}}v^{q_{i}}\right)^{k+1}\right)
\end{eqnarray*}
and so the result follows.\end{proof}
\begin{rem}
The fact that cohomological forms of even degree $\omega$ satisfy
$\omega\wedge\alpha=\alpha\wedge\omega$ for any class $\alpha$,
explains why we have to require that the generators of $H^{*}\left(X,\mathbb{C}\right)$
have odd degree.
\end{rem}
In order to recover $\mu_{\sym^{n}X}$ from $\mu_{X^{n}}^{S_{n}}$,
we will need some simple facts from representation theory of finite
groups. Since this Theorem expresses the equivariant polynomial $\mu_{X^{n}}^{S_{n}}$
as a tensor product, we require a suitable way to obtain the coefficient
of the trivial representation from the product of two representations.
This is provided by the following Lemma. 
\begin{lem}
\label{lem:diaglem}Let $G$ be a finite group and $a=\left[\sum_{l=1}^{k}a_{l}T_{l}\right]$,
$b=\left[\sum_{l=1}^{k}b_{l}T_{l}\right]$ the decomposition into
irreducibles of two $G$-representations. Then the coefficient of
the trivial representation of $a\varotimes b$ is given by
\[
\frac{1}{\left|G\right|}\sum_{l=1}^{k}\left|\left[c_{l}\right]\right|\left(c_{1,l}a_{1}+\cdots+c_{k,l}a_{k}\right)\left(c_{1,l}b_{1}+\cdots+c_{k,l}b_{k}\right)
\]
where $C_{G}=\left(c_{i,j}\right)_{i,j}$ is the character table of
$G$ and $\left|\left[c_{l}\right]\right|$ is the order of the conjugation
class corresponding to the $l$-th column. We conclude, in particular,
that the coefficient of the trivial representation of $a^{\varotimes n}$,
for $n\geq2$, is given by
\[
\frac{1}{\left|G\right|}\sum_{l=1}^{k}\left|\left[c_{l}\right]\right|\left(c_{1,l}a_{1}+\cdots+c_{k,l}a_{k}\right)^{n}.
\]
\end{lem}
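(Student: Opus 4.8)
The plan is to identify the coefficient of the trivial representation in a representation with the multiplicity of its trivial summand, and to compute that multiplicity as an inner product of characters. The only representation-theoretic input I need is the standard fact that for any $G$-representation $W$ with character $\chi_W$, the multiplicity of the trivial representation $T$ equals $\langle \chi_W,\chi_T\rangle=\frac{1}{|G|}\sum_{g\in G}\chi_W(g)$, where I have used that $\chi_T\equiv 1$ is real so that no complex conjugation appears. This is the identity that will produce the prefactor $1/|G|$ in the statement.

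First I would use that the character of a tensor product is the pointwise product of characters, so $\chi_{a\otimes b}=\chi_a\chi_b$. Writing the decompositions as $\chi_a=\sum_{i=1}^{k}a_i\chi_{T_i}$ and $\chi_b=\sum_{i=1}^{k}b_i\chi_{T_i}$, the multiplicity of $T$ in $a\otimes b$ becomes $\frac{1}{|G|}\sum_{g\in G}\chi_a(g)\chi_b(g)$. Next, since $\chi_a$ and $\chi_b$ are class functions, I would regroup the sum over $G$ according to conjugacy classes: choosing a representative $g_l$ of the $l$-th class, whose cardinality is $|[c_l]|$, the sum collapses to $\frac{1}{|G|}\sum_{l=1}^{k}|[c_l]|\,\chi_a(g_l)\chi_b(g_l)$. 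Reading off the character table, whose entries are $c_{i,l}=\chi_{T_i}(g_l)$, gives $\chi_a(g_l)=\sum_{i=1}^{k}a_i c_{i,l}=c_{1,l}a_1+\cdots+c_{k,l}a_k$ and likewise for $b$; substituting yields exactly the claimed expression.

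For the special case, the character of the $n$-fold tensor power is $\chi_{a^{\otimes n}}=\chi_a^{\,n}$, and the identical regrouping over conjugacy classes gives $\frac{1}{|G|}\sum_{l=1}^{k}|[c_l]|\,\chi_a(g_l)^n$, which is precisely the stated formula with the linear factor raised to the $n$-th power. Equivalently, one can deduce it by induction by applying the bilinear first part with $b=a^{\otimes(n-1)}$. I would also remark that the entire computation is multilinear in the coefficient vectors $(a_l)$ and $(b_l)$, so it extends verbatim to the setting of actual interest, where these coefficients are polynomials in $t,u,v$ rather than non-negative integers.

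The argument is essentially bookkeeping, so I do not expect a serious obstacle; the one point deserving care is the observation that computing the multiplicity of the \emph{trivial} representation involves no complex conjugation, so that the character-table entries $c_{i,l}$ enter unconjugated even when $G$ has genuinely complex irreducible characters. This is exactly what makes the resulting expression symmetric and bilinear in $a$ and $b$, and it is why I would be careful to invoke $\langle\chi_W,\chi_T\rangle$ rather than a general inner product when setting up the first step.
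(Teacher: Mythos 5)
Your proof is correct and follows essentially the same route as the paper: both compute $\chi_{a\otimes b}=\chi_a\chi_b$ from the decompositions and then project onto the trivial representation using character orthogonality, with the sum over $G$ grouped into conjugacy classes to produce the factors $\left|\left[c_l\right]\right|/\left|G\right|$. The paper phrases the projection as extracting the first entry of $C_G^{-1}$ applied to the vector of character values (identifying that row via Schur orthogonality), which is just a matrix-language restatement of your inner product $\langle\chi_W,\chi_T\rangle=\frac{1}{|G|}\sum_{g\in G}\chi_W(g)$; your explicit remark that no conjugation enters, and that the identity extends to polynomial coefficients, are welcome clarifications but not a different argument.
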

\begin{proof}
Denote by $\chi_{a\varotimes b}$ $\chi_{a^{\varotimes n}}$and the
characters of $a\varotimes b$ and $a^{\varotimes n}$. Then 
\begin{eqnarray*}
\chi_{a\varotimes b}\left(\left[c_{l}\right]\right) & = & \left(c_{1,l}a_{1}+\cdots+c_{k,l}a_{k}\right)\left(c_{1,l}b_{1}+\cdots+c_{k,l}b_{k}\right)\\
\chi_{a^{\varotimes n}}\left(\left[c_{l}\right]\right) & = & \left(c_{1,l}a_{1}+\cdots+c_{k,l}a_{k}\right)^{n}
\end{eqnarray*}
by the linear and multiplicative property of characters. So we can
recover their coefficient of the trivial representation by taking
the first entry of the vectors $v=C_{G}^{-1}\left(\chi_{a\varotimes b}\left(\left[c_{l}\right]\right)\right)_{l=1}^{k}$
and $u=C_{G}^{-1}\left(\chi_{a^{n}}\left(\left[c_{l}\right]\right)\right)_{l=1}^{k}$.
Explicitly, for $v$ we have 
\begin{eqnarray*}
\left(v\right)_{1} & = & \left(\frac{\left|\left[c_{1}\right]\right|}{\left|G\right|},\cdots,\frac{\left|\left[c_{k}\right]\right|}{\left|G\right|}\right)\times v^{t}\\
 & = & \frac{1}{\left|G\right|}\sum_{l=1}^{k}\left|\left[c_{l}\right]\right|\left(c_{1,l}a_{1}+\cdots+c_{k,l}a_{k}\right)\left(c_{1,l}b_{1}+\cdots+c_{k,l}b_{k}\right)
\end{eqnarray*}
where the first equality follows from Schur orthogonality relations
for columns. By the same reasoning, we also deduce the coefficient
of $a^{\varotimes n}$.
\end{proof}
We now proceed to the deduction of $\mu_{\sym^{n}X}$. From the isomorphism
\eqref{eq:Groequ}, we know the mixed Hodge polynomial equals the
coefficient of the trivial representation when one writes $\mu_{X^{n}}^{S_{n}}$
in a basis of irreducible representations of $S_{n}$. Since the equivariant
polynomial in Theorem \ref{thm:equipol} is expressed in terms of
exterior products of the standard representation, that are irreducible,
the previous Lemma will suffice. 
\begin{thm}
\label{thm:MHP}Let $X$ be a complex quasi-projective variety whose
cohomology is an exterior algebra generated in odd degree, as in Defnition
\ref{def:extalg}. Then, 
\begin{eqnarray*}
\mu_{\sym^{n}X}\left(t,u,v\right) & = & \frac{1}{n!}\sum_{\alpha\in S_{n}}\prod_{i=1}^{m}\det\left(I+t^{d_{i}}u^{p_{i}}v^{q_{i}}M_{\alpha}\right)^{r_{i}}
\end{eqnarray*}
for $M_{\alpha}$ the permutation matrix associated to $\alpha$. \end{thm}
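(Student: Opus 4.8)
The plan is to combine the isomorphism \eqref{eq:Groequ} with the character-theoretic extraction of the trivial isotypic component supplied by Lemma \ref{lem:diaglem}. By \eqref{eq:Groequ}, the mixed Hodge polynomial $\mu_{\sym^{n}X}$ is exactly the coefficient of the trivial representation $\mbox{T}$ when the triply-graded $S_{n}$-module $\mu_{X^{n}}^{S_{n}}$ of Theorem \ref{thm:equipol} is written in an irreducible basis. My first step is therefore to recall that, for any $S_{n}$-representation $W$, the multiplicity of $\mbox{T}$ in $W$ equals the Schur inner product $\langle\chi_{W},\chi_{\mbox{T}}\rangle=\frac{1}{n!}\sum_{\alpha\in S_{n}}\chi_{W}(\alpha)$; grouping this sum by conjugacy classes recovers precisely the column-orthogonality computation of Lemma \ref{lem:diaglem}. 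Applying this with $W=\mu_{X^{n}}^{S_{n}}$, whose ``character'' is now a polynomial in $t,u,v$ obtained by weighting each graded piece by its monomial, I obtain
\[
\mu_{\sym^{n}X}(t,u,v)=\frac{1}{n!}\sum_{\alpha\in S_{n}}\chi_{\mu_{X^{n}}^{S_{n}}}(\alpha).
\]

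Next I would compute this graded character explicitly. From the K\"unneth description in the proof of Theorem \ref{thm:equipol}, the module $\mu_{X^{n}}^{S_{n}}$ is the tensor product $\bigotimes_{i=1}^{m}[\,\cdots\,]^{\otimes r_{i}}$ of single-generator factors $F_{i}:=\sum_{k\ge0}\bigwedge^{k}\rho_{S_{n}}\,(t^{d_{i}}u^{p_{i}}v^{q_{i}})^{k}$, where $\rho_{S_{n}}=\mbox{T}+\mbox{St}$ is the permutation representation on $\mathbb{C}^{n}$. Since characters are multiplicative under tensor product, this gives
\[
\chi_{\mu_{X^{n}}^{S_{n}}}(\alpha)=\prod_{i=1}^{m}\chi_{F_{i}}(\alpha)^{r_{i}}.
\]

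The key step, and the one I expect to carry the real content, is the identification $\chi_{F_{i}}(\alpha)=\det(I+t^{d_{i}}u^{p_{i}}v^{q_{i}}M_{\alpha})$. This follows from the exterior-power generating function: if $\alpha$ acts on $\mathbb{C}^{n}$ through the permutation matrix $M_{\alpha}=\rho_{S_{n}}(\alpha)$ with eigenvalues $\lambda_{1},\dots,\lambda_{n}$, then $\chi_{\bigwedge^{k}\rho_{S_{n}}}(\alpha)=e_{k}(\lambda_{1},\dots,\lambda_{n})$, so that for $z=t^{d_{i}}u^{p_{i}}v^{q_{i}}$ one has $\sum_{k\ge0}z^{k}\,\chi_{\bigwedge^{k}\rho_{S_{n}}}(\alpha)=\prod_{j=1}^{n}(1+z\lambda_{j})=\det(I+zM_{\alpha})$. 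Here I must also check that the reindexed form appearing in Theorem \ref{thm:equipol}, written via $\bigwedge^{k}\mbox{St}$ together with the factor $(1+t^{d_{i}}u^{p_{i}}v^{q_{i}})$, defines the same generating series; but this is just the telescoping identity $\bigwedge^{k}\rho_{S_{n}}=\bigwedge^{k}\mbox{St}\oplus\bigwedge^{k-1}\mbox{St}$ already used in that proof, so no new input is needed.

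Substituting these determinants back yields
\[
\mu_{\sym^{n}X}(t,u,v)=\frac{1}{n!}\sum_{\alpha\in S_{n}}\prod_{i=1}^{m}\det\left(I+t^{d_{i}}u^{p_{i}}v^{q_{i}}M_{\alpha}\right)^{r_{i}},
\]
which is the claim. The only genuine obstacle is bookkeeping: ensuring that the grading by $(k,p,q)$ is transported correctly through the K\"unneth and exterior-power identifications, so that the weight $z=t^{d_{i}}u^{p_{i}}v^{q_{i}}$ attaches to the correct exterior degree. The representation theory itself is entirely furnished by Lemma \ref{lem:diaglem} together with the standard determinant identity for exterior powers.
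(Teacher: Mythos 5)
Your proposal is correct and follows essentially the same route as the paper's proof: extract the trivial isotypic coefficient of the equivariant polynomial from Theorem \ref{thm:equipol} via character averaging (your group-element sum is just the conjugacy-class computation of Lemma \ref{lem:diaglem} regrouped), use multiplicativity of characters over the tensor factors, and identify each factor's graded character with $\det\left(I+t^{d_{i}}u^{p_{i}}v^{q_{i}}M_{\alpha}\right)$ through the exterior-power generating function. The only cosmetic difference is that you work directly with the permutation representation $\rho_{S_{n}}$ rather than the paper's reindexed form in $\bigwedge^{k}\mbox{St}$, which you correctly note is equivalent by the telescoping identity.
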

\begin{proof}
In this proof, we will follow the notations of the previous Lemma.
We start by applying the second equality in Lemma \ref{lem:diaglem}
to the equivariant polynomial in Theorem \ref{thm:equipol}. We obtain
\begin{eqnarray*}
\mu_{\sym^{n}X}\left(t,x\right) & = & \frac{1}{n!}\sum_{l=1}^{p\left(n\right)}\left|\left[c_{l}\right]\right|\prod_{i=1}^{m}p_{l}\left(t^{d_{i}},u^{p_{i}},v^{q_{i}}\right)^{r_{i}}
\end{eqnarray*}
where $p_{l}\left(t,u,v\right)=\left(c_{1,l}a_{1}\left(t,u,v\right)+\cdots+c_{k,l}a_{k}\left(t,u,v\right)\right)$.
In here, the polynomials $a_{i}\left(t,u,v\right)$ are the coefficients
of the irreducible representations in 
\[
\sum_{k=0}^{n-1}\left(\left(tuv\right)^{k}+\left(tuv\right)^{k+1}\right)\bigwedge^{k}\mbox{St}.
\]
So $a_{i}=0$ unless $i$ corresponds to a exterior product of the
standard representation. Let $\rho_{S_{n}}=\mbox{St}+\mbox{T}$. Since
\begin{eqnarray*}
\det\left(I+x\rho_{S_{n}}\left(c_{l}\right)\right) & = & \sum_{k=0}^{\infty}\chi_{\bigwedge^{k}\rho_{S_{n}}}\left(\left[c_{l}\right]\right)x^{k}\\
 & = & \sum_{k=0}^{\infty}\left[\chi_{\bigwedge^{k}\mbox{St}}+\chi_{\bigwedge^{k-1}\mbox{St}}\right]\left(\left[c_{l}\right]\right)x^{k}\\
 & = & \sum_{k=0}^{n-1}\left[\chi_{\bigwedge^{k}\mbox{St}}\left(\left[c_{l}\right]\right)\left(x^{k}+x^{k+1}\right)\right]
\end{eqnarray*}
the result follows.
\end{proof}

\section{Applications to Algebraic Groups}

In this Section, we explore some applications of the main results
in the previous one (Theorem \ref{thm:equipol} and Theorem \ref{thm:MHP}).
We will focus on the case of algebraic groups and complex tori. This
article focus on the complex algebraic case, where symmetric products
have interesting properties, such as their connections to motivic
theory, local Zeta functions or to the Hilbert scheme of points. As
will be observed, the techniques employed in here also adapt to the
case of Lie groups, and we will study those in the final subsection.

\subsection{Complex Tori}

A complex torus $\mathbb{T}_{d}$ of dimension $d$ is a complex manifold
of the form $\mathbb{C}^{d}/L$, where $L$ is a lattice (subgroup
of $(\mathbb{C}^{d},+)$ isomorphic to $\mathbb{Z}^{2d}$). Being
compact and Kähler manifolds, the cohomology of $\mathbb{T}_{d}$
is endowed with a pure Hodge structure.
\begin{rem}
Whenever $L$ satisfies the Riemann bilinear relations, we get an
embedding of $\mathbb{C}^{d}/L$ into some projective space. Then
in these cases $\mathbb{T}_{d}$ forms an abelian variety of dimension
$d$.\end{rem}
\begin{thm}
\label{thm:AbVars}Let $X=\mathbb{T}_{d}$ be a complex torus of dimension
$d$. The equivariant mixed Hodge polynomial related to the permutation
action $S_{n}\curvearrowright X^{n}$ is given by 
\begin{eqnarray*}
\mu_{\mathbb{T}_{d}^{n}}^{S_{n}}\left(t,u,v\right) & = & \left[\sum_{k=0}^{n-1}\bigwedge^{k}\mbox{St}\left(tu\right)^{k}\left(1+tu\right)\right]^{\varotimes d}\varotimes\left[\sum_{k=0}^{n-1}\bigwedge^{k}\mbox{St}\left(tv\right)^{k}\left(1+tv\right)\right]^{\varotimes d}.
\end{eqnarray*}
\end{thm}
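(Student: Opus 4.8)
The plan is to recognize that Theorem \ref{thm:AbVars} is a direct specialization of the general Theorem \ref{thm:equipol}, so the entire task reduces to identifying the Hodge-theoretic data of a complex torus and plugging it in. First I would recall that $\mathbb{T}_{d}=\mathbb{C}^{d}/L$ is a compact K\"ahler manifold, hence its cohomology carries a \emph{pure} Hodge structure and, as is classical, is the full exterior algebra on its first cohomology:
\[
H^{*}\left(\mathbb{T}_{d},\mathbb{C}\right)\cong\bigwedge H^{1}\left(\mathbb{T}_{d},\mathbb{C}\right),
\]
with $\dim_{\mathbb{C}}H^{1}=2d$. This exhibits the cohomology as an exterior algebra generated in degree one, so the hypothesis of Definition \ref{def:extalg} is satisfied with all generators in odd degree $d_{i}=1$.

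The key step is to read off the mixed Hodge bigrading of the generators. Since the Hodge structure on $H^{1}$ is pure of weight one, it decomposes as
\[
H^{1}\left(\mathbb{T}_{d},\mathbb{C}\right)=H^{1;1,0}\left(\mathbb{T}_{d},\mathbb{C}\right)\oplus H^{1;0,1}\left(\mathbb{T}_{d},\mathbb{C}\right),
\]
and the Hodge symmetry $H^{1;1,0}\cong\overline{H^{1;0,1}}$ forces $h^{1;1,0}=h^{1;0,1}=d$. Thus in the notation of Definition \ref{def:extalg} we take $m=2$, with one family of $r_{1}=d$ generators having $(d_{1},p_{1},q_{1})=(1,1,0)$ and a second family of $r_{2}=d$ generators having $(d_{2},p_{2},q_{2})=(1,0,1)$. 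Here the monomials simplify to $t^{d_{1}}u^{p_{1}}v^{q_{1}}=tu$ and $t^{d_{2}}u^{p_{2}}v^{q_{2}}=tv$.

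Finally I would substitute this data into the formula of Theorem \ref{thm:equipol}. The outer tensor product $\bigotimes_{i=1}^{2}$ becomes a single tensor product of two factors, the $i=1$ factor raised to the power $\varotimes d$ carrying $tu$ and the $i=2$ factor raised to $\varotimes d$ carrying $tv$, which is exactly the asserted expression. I do not expect any genuine obstacle here: the only point requiring care is the correct bookkeeping of the Hodge numbers of $H^{1}$, and the purity of the Hodge structure (rather than a nontrivial weight filtration) is what makes the two families of generators sit cleanly in bidegrees $(1,0)$ and $(0,1)$ with equal multiplicity $d$.
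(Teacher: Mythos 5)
Your proposal is correct and follows essentially the same route as the paper's proof: both identify $H^{*}(\mathbb{T}_{d},\mathbb{C})$ as the exterior algebra on $H^{1}$, split the $2d$ degree-one generators into $d$ classes of Hodge type $(1,0)$ and $d$ of type $(0,1)$, and then specialize Theorem \ref{thm:equipol} with monomials $tu$ and $tv$. The only cosmetic difference is that you invoke Hodge symmetry to get $h^{1;1,0}=h^{1;0,1}=d$, whereas the paper identifies $H^{1,0}$ explicitly via the lattice $H_{1}(\mathbb{T}_{d},\mathbb{Z})$; both are standard and equally valid.
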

\begin{proof}
The singular cohomology of a complex torus $X$ is characterized by
\begin{eqnarray*}
H^{*}\left(X,\mathbb{Q}\right) & \cong & \bigwedge H^{1}\left(X,\mathbb{Q}\right).
\end{eqnarray*}
The pure Hodge structure in $H^{*}\left(X,\mathbb{Q}\right)$ is completely
determined by the previous equality 
\begin{eqnarray*}
H^{p,q}\left(X\right) & \cong & \left(\bigwedge^{p}H^{1,0}\left(X\right)\right)\varotimes\left(\bigwedge^{q}H^{0,1}\left(X\right)\right).
\end{eqnarray*}
Moreover, since $H_{1}\left(\mathbb{T}_{d},\mathbb{Z}\right)$ can
be identified with the lattice of $\mathbb{T}_{d}$, we have $H^{1,0}\left(\mathbb{T}_{d}\right)\cong\left\langle \omega_{1},\cdots,\omega_{d}\right\rangle _{\mathbb{C}}\cong\overline{H^{0,1}\left(\mathbb{T}_{d}\right)}$
for certain cohomological classes $\omega_{i}$ of degree 1. Then
we can apply the conditions in Theorem \ref{thm:equipol}, that gives
us 
\begin{eqnarray*}
\mu_{\mathbb{T}_{d}^{n}}^{S_{n}}\left(t,u,v\right) & = & \left[\sum_{k=0}^{n-1}\bigwedge^{k}\mbox{St}\left(\left(tu\right)^{k}+\left(tu\right)^{k+1}\right)\right]^{\varotimes d}\varotimes\left[\sum_{k=0}^{n-1}\bigwedge^{k}\mbox{St}\left(\left(tv\right)^{k}+\left(tv\right)^{k+1}\right)\right]^{\varotimes d},
\end{eqnarray*}
as wanted.\end{proof}
\begin{cor}
For $X=\mathbb{T}_{d}$ a complex torus of dimension $d$ we have:
\begin{eqnarray*}
\mu_{\sym^{n}\mathbb{T}_{d}}\left(t,u,v\right) & = & \frac{1}{n!}\sum_{\alpha\in S_{n}}\det\left(I+tuM_{\alpha}\right)^{d}\det\left(I+tvM_{\alpha}\right)^{d}
\end{eqnarray*}
where $M_{\alpha}$ is the matrix of permutation of $\alpha$.\end{cor}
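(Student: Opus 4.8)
The plan is to specialize Theorem \ref{thm:MHP} to the case $X = \mathbb{T}_d$, since the corollary is precisely that general formula evaluated on the complex torus. The only genuine work is to correctly read off the combinatorial data $(m; d_i, p_i, q_i; r_i)$ attached to the cohomology of $\mathbb{T}_d$ when it is presented as an exterior algebra in the sense of Definition \ref{def:extalg}, and then to substitute.

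First I would recall the cohomological description already established in the proof of Theorem \ref{thm:AbVars}: the rational cohomology of $\mathbb{T}_d$ is the exterior algebra $\bigwedge H^1(\mathbb{T}_d,\mathbb{Q})$, and the pure Hodge structure splits $H^1$ into the two Hodge summands $H^{1,0}(\mathbb{T}_d) \cong \langle \omega_1,\dots,\omega_d\rangle_{\mathbb{C}}$ and $\overline{H^{0,1}(\mathbb{T}_d)}$. Thus the algebra is generated by $2d$ classes, all of odd cohomological degree (indeed degree $1$), falling into exactly two Hodge types. This yields the parameters $m = 2$, with a first family $(d_1,p_1,q_1) = (1,1,0)$ of multiplicity $r_1 = d$ and a second family $(d_2,p_2,q_2) = (1,0,1)$ of multiplicity $r_2 = d$. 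In particular the hypotheses of Theorem \ref{thm:MHP} are met.

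Next I would substitute these values into the formula of Theorem \ref{thm:MHP}. The monomials $t^{d_i}u^{p_i}v^{q_i}$ become $tu$ for $i=1$ and $tv$ for $i=2$, so the product over $i$ collapses to the two factors $\det(I + tu\, M_\alpha)^{d}$ and $\det(I + tv\, M_\alpha)^{d}$, giving the asserted expression after summing over $\alpha \in S_n$ and dividing by $n!$.

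I do not anticipate a real obstacle here, as the statement is a direct specialization; the only point requiring care is the bookkeeping of Hodge types, namely keeping the $(1,0)$ and $(0,1)$ generators separate so that they contribute the distinct variables $u$ and $v$ respectively (rather than being merged into a single family). Alternatively, one could bypass Theorem \ref{thm:MHP} entirely and argue directly from the equivariant polynomial $\mu_{\mathbb{T}_d^n}^{S_n}$ of Theorem \ref{thm:AbVars}, extracting the coefficient of the trivial representation via the second identity of Lemma \ref{lem:diaglem}; this reproduces the same determinantal sum and serves as a useful internal consistency check.
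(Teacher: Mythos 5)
Your proposal is correct and follows essentially the same route as the paper: the paper's proof likewise reads off the torus parameters (the $d$ generators of type $(1;1,0)$ and $d$ of type $(1;0,1)$, established in the proof of Theorem \ref{thm:AbVars}) and then invokes Theorem \ref{thm:MHP} to obtain the determinantal sum. Your careful bookkeeping of the two Hodge types, and the alternative check via Lemma \ref{lem:diaglem}, are exactly the details the paper compresses into ``follows immediately.''
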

\begin{proof}
This follows immediately from the Theorem and from Theorem \ref{thm:MHP}.
\end{proof}

\subsection{Linear algebraic groups}

We now proceed to the case of linear algebraic groups. These are subgroups
of some $GL\left(n,\mathbb{C}\right)$ given by polynomial equations.
If $G$ is a Lie group, besides the usual cup product the cohomology
ring is also endowed with a natural co-product. This is given by the
pullback of the product map composed with the Künneth isomorphism,
\[
\Delta:H^{*}\left(G,\mathbb{C}\right)\to H^{*}\left(G\times G,\mathbb{C}\right)\cong H^{*}\left(G,\mathbb{C}\right)\varotimes H^{*}\left(G,\mathbb{C}\right).
\]
We also have an antipode map, given by the pullback of the inverse
map $g\mapsto g^{-1}$. It is a well known fact that the cohomology
with these three operations forms a finitely generated Hopf algebra\footnote{Actually the cohomology - or homology, since Hopf algebras are self-dual
- of Lie groups were the inspiration behind the definition of Hopf
algebra.}. From Hopf's Theorem (see \cite{Hop}), we know there exists a set
$\left\{ \omega_{1},\cdots,\omega_{k}\right\} $ of classes of homogeneous
elements of odd degree that generate $H^{*}\left(G,\mathbb{C}\right)$
for the cup product. Since these elements have odd degree, one has
\begin{eqnarray}
H^{*}\left(G,\mathbb{C}\right) & \cong & \bigwedge\left\langle \omega_{1},\cdots,\omega_{k}\right\rangle _{\mathbb{C}}.\label{eq:primten}
\end{eqnarray}
These elements $\omega_{i}$ are the primitive elements of $H^{*}\left(G,\mathbb{C}\right)$
- those satisfying $\Delta\left(x\right)=x\varotimes1+1\varotimes x$.
To obtain the characterization of the mixed Hodge structure on their
symmetric products, we will use a result by Deligne on the seminal
paper on mixed Hodge structures of singular varieties \cite{De2}. 
\begin{thm}
\label{thm:LAGs}Let $G$ be a complex linear algebraic group. Then
there exists $m\in\mathbb{N}$ and $r_{1},\cdots,r_{m}\in\mathbb{N}_{0}^{m}$
s.t.
\begin{eqnarray*}
\mu_{G^{n}}^{S_{n}}\left(t,u,v\right) & = & \bigotimes_{i=1}^{m}\left[\sum_{k=0}^{n-1}\bigwedge^{k}\mbox{St}\left(\left(t^{2i-1}u^{i}v^{i}\right)^{k}+\left(t^{2i-1}u^{i}v^{i}\right)^{k+1}\right)\right]^{\varotimes r_{i}}
\end{eqnarray*}
where $\mbox{St}$ is the standard representation. Consequently,
\begin{eqnarray*}
\mu_{\sym^{n}G}\left(t,u,v\right) & = & \frac{1}{n!}\sum_{\alpha\in S_{n}}\prod_{i=1}^{m}\det\left(I+t^{2i-1}\left(uv\right)^{i}M_{\alpha}\right)^{r_{i}}.
\end{eqnarray*}
\end{thm}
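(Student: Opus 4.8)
The plan is to recognize $G$ as a variety of the type governed by Definition \ref{def:extalg} and then read off both formulas as direct specializations of Theorem \ref{thm:equipol} and Theorem \ref{thm:MHP}. The cup-product side of the story is already supplied by Hopf's Theorem; the only genuinely new ingredient is Hodge-theoretic, namely locating the odd-degree primitive generators in the triple grading $H^{*;*,*}\left(G,\mathbb{C}\right)$. Once their mixed Hodge bidegrees are pinned down, nothing remains but substitution.

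First I would record the ring structure. By \eqref{eq:primten}, Hopf's Theorem furnishes homogeneous primitive generators $\omega_{1},\cdots,\omega_{k}$ of odd cohomological degree with
\[
H^{*}\left(G,\mathbb{C}\right)\cong\bigwedge\left\langle \omega_{1},\cdots,\omega_{k}\right\rangle _{\mathbb{C}},
\]
so the cup product already has the exterior form demanded by Definition \ref{def:extalg}. What is left is to determine, for each generator, the Hodge bidegree $\left(p,q\right)$ of the component $H^{d;p,q}\left(G,\mathbb{C}\right)$ in which it lives.

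The key step is to invoke Deligne's description of the mixed Hodge structure on the cohomology of a linear algebraic group \cite{De2}: this cohomology is of Hodge--Tate type, and a primitive generator of cohomological degree $2i-1$ is pure of bidegree $\left(i,i\right)$, i.e. lies in $H^{2i-1;i,i}\left(G,\mathbb{C}\right)$. Heuristically this is because these classes transgress from the degree-$2i$ characteristic classes on the classifying space, which are of type $\left(i,i\right)$, and the transgression differential is a morphism of mixed Hodge structures, lowering the cohomological degree by one while preserving the bidegree in the Tate situation. Grouping the generators by cohomological degree, I would let $r_{i}$ be the number of primitive generators of degree $2i-1$, permitting $r_{i}=0$ when no generator occurs in that degree; this is exactly what accommodates semisimple groups, which carry no degree-one generator. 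Thus $G$ satisfies Definition \ref{def:extalg} with $d_{i}=2i-1$ and $p_{i}=q_{i}=i$.

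Finally I would substitute. Since $t^{d_{i}}u^{p_{i}}v^{q_{i}}=t^{2i-1}u^{i}v^{i}=t^{2i-1}\left(uv\right)^{i}$, Theorem \ref{thm:equipol} yields the stated expression for $\mu_{G^{n}}^{S_{n}}$ verbatim, and Theorem \ref{thm:MHP} yields the determinantal formula for $\mu_{\sym^{n}G}$. I expect the main obstacle to be the third step: justifying through Deligne's theory that the odd-degree primitives are genuinely pure of Hodge--Tate type $\left(i,i\right)$ in degree $2i-1$, since everything that follows is a formal specialization of results already in hand.
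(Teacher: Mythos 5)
Your proposal is correct and follows essentially the same route as the paper: both arguments combine Hopf's Theorem (via \eqref{eq:primten}) with Deligne's result in \cite{De2} that the primitive subspace $P^{*}$ is a sub-mixed Hodge structure with $P^{2i}\cong 0$ and $P^{2i-1}$ pure of type $\left(i,i\right)$, so that $G$ satisfies Definition \ref{def:extalg} with $d_{i}=2i-1$, $p_{i}=q_{i}=i$, after which both formulas follow by substitution into Theorem \ref{thm:equipol} and Theorem \ref{thm:MHP}. The only cosmetic difference is your transgression heuristic, which the paper omits in favor of citing \cite[Theorem 9.1.5]{De2} directly.
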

\begin{proof}
Being a quasi-projective algebraic variety, the cohomology of $G$
is endowed with a mixed Hodge structure, that was characterized in
\cite[Theorem 9.1.5]{De2}. This result states that the vector space
$P^{*}=\left\langle \omega_{1},\cdots,\omega_{k}\right\rangle _{\mathbb{C}}$
generated by primitive elements is a sub-mixed Hodge structure of
$H^{*}\left(X,\mathbb{C}\right)$. Then the isomorphism \ref{eq:primten}
preserves mixed Hodge structures. Moreover, Deligne also showed that
for every $i=0,\cdots,\dim_{\mathbb{C}}G$, $P^{2i}\cong0$ and the
mixed Hodge structure on each $P^{2i-1}$ is pure of weights $\left(i,i\right)$.
If we divide $P^{*}$ in its graded components, we get 
\begin{eqnarray*}
H^{*;*,*}\left(G,\mathbb{C}\right) & \cong & \bigotimes_{i=1}^{m}\bigwedge\left\langle \omega_{i,1},\cdots,\omega_{i,r_{i}}\right\rangle _{\mathbb{C}}
\end{eqnarray*}
where on the right hand side we use the only multi-grading compatible
with the tensor and exterior products obtained by letting $\omega_{i,k}\in H^{2i-1,i,i}\left(G,\mathbb{C}\right)$,
$\forall k=1,\cdots,r_{i}$. From this isomorphism, we can read the
mixed Hodge polynomial of $G$, 
\begin{eqnarray*}
\mu_{G}\left(t,u,v\right) & = & \prod_{i=1}^{m}\left(1+t^{2i-1}\left(uv\right)^{i}\right)^{r_{i}}.
\end{eqnarray*}
If we apply Theorem \ref{thm:equipol}, we get an expression for the
equivariant polynomial of the permutation action $S_{n}\curvearrowright G^{n}$:
\begin{eqnarray*}
\mu_{G^{n}}^{S_{n}}\left(t,u,v\right) & = & \bigotimes_{i=1}^{m}\left[\sum_{k=0}^{n-1}\bigwedge^{k}\mbox{St}\left(\left(t^{2i-1}u^{i}v^{i}\right)^{k}+\left(t^{2i-1}u^{i}v^{i}\right)^{k+1}\right)\right]^{\varotimes r_{i}}.
\end{eqnarray*}
By Theorem \ref{thm:MHP}, we can also obtain a formula for the mixed
Hodge polynomial of the quotient
\begin{eqnarray}
\mu_{\sym^{n}G}\left(t,u,v\right) & = & \frac{1}{n!}\sum_{\alpha\in S_{n}}\prod_{i=1}^{m}\det\left(I+t^{2i-1}\left(uv\right)^{i}M_{\alpha}\right)^{r_{i}}\label{eq:MHPLAGs}
\end{eqnarray}
where $M_{\alpha}$ is the matrix of permutation of $\alpha$. \end{proof}
\begin{rem}
We have the following remarks concerning this example:
\begin{enumerate}
\item If we consider the particular case $G=\left(\mathbb{C}^{*}\right)^{r}$,
we recover the result in \cite[Proposition 5.8]{FS}. In here, we
obtained the mixed Hodge polynomial of the free abelian character
variety $\mathcal{M}_{r}GL\left(n,\mathbb{C}\right)\cong\sym^{n}\left(\mathbb{C}^{*}\right)^{r}$
(\cite{FL2,Sik}). So Theorem \ref{thm:LAGs} provide an alternative
proof of this result;
\item The cohomology ring of a connected linear algebraic group $G$ coincides
with a product of $\mathbb{C}^{m}\backslash\left\{ 0\right\} $. For
this reason, the formula above also calculates the mixed Hodge polynomial
of the symmetric product of arbitrary products of punctured complex
vector spaces. 
\end{enumerate}
\end{rem}

\subsection{The topological case}

In the final part of this Section, we deal with the Poincaré polynomial
related to the singular cohomology of symmetric products of real topological
Lie groups. The cohomology of Lie groups is an old theme of research,
where much is known, but we could not found these formulas in the
literature. 

As mentioned in the Introduction, the results in the Theorems \ref{thm:AbVars}
and \ref{thm:LAGs} allows us to deduce the Poincaré polynomial of
these spaces by setting 
\begin{eqnarray*}
P_{X}\left(t\right) & = & \mu_{X}\left(t,1,1\right).
\end{eqnarray*}
On the other hand, the techniques employed in the proofs of the Theorems
\ref{thm:equipol} and \ref{thm:MHP} are combinatorial, and adapt
well to the topological case. In this setting, we are concerned with
those topological spaces $X$ whose singular cohomology satisfies
a similar property to that of Definition \ref{def:extalg}: to say
that the cohomology of $X$ is an exterior algebra generated in odd
degree we require the existence of a finite set of cohomology classes
of odd degree whose exterior products generate the cohomology ring,
as in that definition. We now formalize this in a way that allows
us to keep track of the grading.
\begin{defn}
Fix $m\in\mathbb{N}$ and $\left(r_{1},\cdots,r_{m}\right)\in\mathbb{N}^{m}$.
We say that the singular cohomology of a topological space $X$ is
an \textit{exterior algebra generated in odd degree} if there are
classes $\omega_{j}^{i}\in H^{d_{i}}\left(X\right)$, for $i=1,\cdots,m$
and $j\in\{1,\cdots,r_{i}\}$, with $d_{i}\in2\mathbb{N}-1$, such
that
\begin{eqnarray*}
H^{*}\left(X\right) & = & \bigwedge\left\langle \omega_{j}^{i}\right\rangle _{i,j=1}^{m,r_{i}}.
\end{eqnarray*}

\end{defn}
In this definition $r_{i}$ counts the number of generators of degree
$d_{i}$: similarly, in Definition \ref{def:extalg} $r_{i}$ counted
the number of generators of odd degree $d_{i}$ and weights $\left(p_{i},q_{i}\right)$.
Lie groups form an important class of real topological spaces whose
cohomology is as in this definition - see the first remarks in 3.2. 
\begin{thm}
Let $G$ be a topological Lie group and admit the permutation action
of $S_{n}$ in its cartesian product $G^{n}$. Then there exists $m\in\mathbb{N}$
and $r_{1},\cdots,r_{m}\in\mathbb{N}_{0}^{m}$ such that
\begin{eqnarray*}
P_{\sym^{n}G}\left(t\right) & = & \frac{1}{n!}\sum_{\alpha\in S_{n}}\prod_{i=1}^{m}\det\left(I+t^{2i-1}M_{\alpha}\right)^{r_{i}}
\end{eqnarray*}
where $M_{\alpha}$ is the permutation matrix associated to $\alpha$. \end{thm}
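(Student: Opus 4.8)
The plan is to run the purely topological analog of the argument behind Theorem~\ref{thm:LAGs}, working with the single-graded singular cohomology $H^{*}\left(G\right)$ over $\mathbb{Q}$ in place of the triply-graded mixed Hodge cohomology. Since the statement concerns an arbitrary topological Lie group, and hence need not be the underlying variety of a complex algebraic group, I cannot simply specialize $\mu_{X}\left(t,1,1\right)$; instead I reuse the combinatorial engine of Theorems~\ref{thm:equipol} and \ref{thm:MHP}. The first step is structural: invoke Hopf's Theorem exactly as in the discussion preceding Theorem~\ref{thm:LAGs}, so that the Hopf-algebra structure on $H^{*}\left(G\right)$ forces it to be an exterior algebra on finitely many homogeneous primitive generators of odd degree. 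Collecting these generators by degree, let $r_{i}$ be the number of generators of degree $2i-1$ and let $m$ be the largest such index (bounded by $\dim G$). This places $G$ in the setting of the topological definition above, with $H^{*}\left(G\right)\cong\bigwedge\left\langle \omega_{j}^{i}\right\rangle$ and each $\omega_{j}^{i}$ of degree $2i-1$.

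The second step computes the equivariant Poincar\'e polynomial $P_{G^{n}}^{S_{n}}\left(t\right)=\sum_{k}\left[H^{k}\left(G^{n}\right)\right]_{S_{n}}t^{k}$, a graded virtual $S_{n}$-representation, by repeating the K\"unneth manipulation in the proof of Theorem~\ref{thm:equipol} while tracking only the total degree. The K\"unneth isomorphism gives $H^{*}\left(G^{n}\right)\cong\bigotimes_{i=1}^{m}\bigotimes_{j=1}^{n}\bigwedge\left\langle \omega_{1}^{i,j},\cdots,\omega_{r_{i}}^{i,j}\right\rangle$, on which $S_{n}$ permutes the index $j$; the odd-degree hypothesis (as noted in the Remark following Theorem~\ref{thm:equipol}) guarantees that this permutation realizes the exterior power of the permutation representation with the correct Koszul signs. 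Writing $\rho_{S_{n}}=\mathrm{T}+\mathrm{St}$ and specializing the weight monomials $t^{d_{i}}u^{p_{i}}v^{q_{i}}$ to $t^{2i-1}$, I obtain
\[
P_{G^{n}}^{S_{n}}\left(t\right)=\bigotimes_{i=1}^{m}\left[\sum_{k=0}^{n-1}\bigwedge^{k}\mathrm{St}\,\left(t^{2i-1}\right)^{k}\left(1+t^{2i-1}\right)\right]^{\varotimes r_{i}}.
\]

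The third step passes to the quotient. Here the algebraic input \eqref{eq:Groequ} is replaced by its topological counterpart: for a finite group acting on a reasonable space, the transfer (averaging) argument gives $H^{*}\left(\sym^{n}G;\mathbb{Q}\right)\cong H^{*}\left(G^{n};\mathbb{Q}\right)^{S_{n}}$, so that $P_{\sym^{n}G}$ is the multiplicity of the trivial representation in $P_{G^{n}}^{S_{n}}$. Applying Lemma~\ref{lem:diaglem} to the tensor product above, and then the determinant identity $\det\left(I+t^{2i-1}M_{\alpha}\right)=\sum_{k}\chi_{\bigwedge^{k}\rho_{S_{n}}}\left(\left[\alpha\right]\right)\left(t^{2i-1}\right)^{k}$ exactly as in the proof of Theorem~\ref{thm:MHP}, converts the sum over conjugacy classes into the sum over $\alpha\in S_{n}$ and yields the stated formula.

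Because each step is a faithful transcription of an already-established argument, I do not expect a serious obstacle. The two points needing genuine care are the sign bookkeeping in the second step -- ensuring that permuting odd-degree tensor factors really produces $\bigwedge^{k}\mathrm{St}$ rather than a symmetric power, which is precisely what the odd-degree hypothesis secures -- and the invocation of the transfer isomorphism in the third step, which is standard over a field of characteristic zero. The one subtlety specific to this topological setting is justifying the Hopf-algebra/exterior-algebra structure for a possibly non-compact Lie group; this is handled by retracting $G$ onto a maximal compact subgroup, to which Hopf's Theorem applies directly.
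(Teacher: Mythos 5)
Your proposal is correct and takes essentially the same route as the paper: the paper's proof likewise reduces to mirroring Theorems \ref{thm:equipol} and \ref{thm:MHP} with the equivariant Poincar\'e polynomial $P_{G^{n}}^{S_{n}}(t)$, once the exterior-algebra structure (Hopf's Theorem) and the invariants isomorphism for the finite quotient are in place. The only cosmetic difference is the justification of $H^{*}\left(\sym^{n}G;\mathbb{Q}\right)\cong H^{*}\left(G^{n};\mathbb{Q}\right)^{S_{n}}$ --- you use the transfer argument, while the paper identifies singular cohomology with sheaf cohomology over $\mathbb{Q}$ so that \eqref{eq:Groequ} applies --- and your explicit retraction onto a maximal compact subgroup to cover non-compact $G$ is a point the paper leaves implicit.
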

\begin{proof}
The isomorphism \eqref{eq:Groequ}, identifying the cohomology of
a finite quotient with its invariant part, applies whenever we are
working in a sheaf cohomology theory with values in a field of characteristic
0 (actually, any field whose characteristic does not divide the order
of the finite group in question). Since the singular cohomology of
a topological Lie group $G$ can be identified with their sheaf cohmology
with $\mathbb{Q}$-coefficients, to prove this result it suffices
to mirror the proofs of Theorems \ref{thm:AbVars}, \ref{thm:LAGs}
and \ref{thm:LAGs}, replacing the equivariant mixed Hodge polynomial
by an equivariant Poincaré polynomial 
\begin{eqnarray*}
P_{G^{n}}^{S_{n}}\left(t\right) & := & \sum_{k}\left[H^{k}\left(G^{n}\right)\right]_{S_{n}}t^{k}
\end{eqnarray*}
codifying the induced action of $S_{n}$ on the cohomology groups
$H^{k}\left(G^{n}\right)$. \end{proof}
\begin{rem}
This formula agrees with the formula in \cite[Theorem 1.4]{St}, that
includes the case $G=\left(\mathbb{C}^{*}\right)^{r}$. Observe that
the Theorem above is valid for any topological space where the isomorphism
\eqref{eq:Groequ} applies. 
\end{rem}

\section{Combinatorial identities}

In this last Section we will use Theorem \ref{thm:equipol} and a
formula of J. Cheah (\cite{Ch}) to introduce some combinatorial identities,
in a similar way to \cite[Theorem 5.31]{FS}. Since the equalities
we will obtain only deal with the Betti numbers, we will focus on
the case of linear algebraic groups, covered here in Theorem \ref{thm:LAGs}.
For this, we will follow the same procedure as in \cite[Sections 5.5 and 5.6]{FS}.
As for this result, we could not find out whether these identities
were noticed before.

Let us start by reviewing the key ideas covered in \cite[Sections 5.5]{FS}.
As in here, fix $X$ as a quasi-projective algebraic variety with
given compactly supported Hodge numbers $h_{c}^{k;p,q}$. The mentioned
formula of J. Cheah is given by
\begin{equation}
\sum_{n\geq0}\,\mu_{\sym^{n}X}^{c}(t,u,v)\,z^{n}=\prod_{p,q,k}\left(1-(-1)^{k}u^{p}v^{q}t{}^{k}z\right)^{(-1)^{k+1}h_{c}^{k,p,q}(X)},\label{eq:Cheah}
\end{equation}
so it gives the generating function of the mixed Hodge polynomials
of all symmetric products $\sym^{n}X$. If one assumes that $X$ satisfies
a version of Poincaré duality compatible with mixed Hodge structures,
as it happens for smooth varieties or orbifolds, a first simple observation
is that this formula maintains unaltered when passing from $\mu^{c}$
to $\mu$ and from $h_{c}^{k,p,q}$ to $h^{k,p,q}$
\begin{equation}
\sum_{n\geq0}\,\mu_{\sym^{n}X}(t,u,v)\,z^{n}=\prod_{p,q,k}\left(1-(-1)^{k}u^{p}v^{q}t{}^{k}z\right)^{(-1)^{k+1}h^{k,p,q}(X)}\label{eq:Cheah-1}
\end{equation}
(see \cite[Proposition 5.22]{FS}). 
\begin{rem}
\label{rem:frCh} As remarked in the Introduction, the formulas obtained
in Theorem \ref{thm:MHP} can be obtained from Cheah's identity. Indeed,
following the notations in the proof of this Theorem, we have 
\begin{eqnarray*}
\prod_{i=1}^{m}\det\left(I+t^{d_{i}}u^{p_{i}}v^{q_{i}}\rho_{S_{n}}\left(\underline{n}\right)\right)^{r_{i}} & = & \prod_{i=1}^{m}p_{\underline{n}}\left(t^{d_{i}},u^{p_{i}},v^{q_{i}}\right)^{r_{i}}\\
 & = & \prod_{i=1}^{m}\prod_{j=1}^{n}\left(1-\left(-t^{d_{i}}u^{p_{i}}v^{q_{i}}\right)^{j}\right)^{a_{j}r_{i}}\\
 & = & \prod_{j=1}^{n}\left[\prod_{i=1}^{m}\left(1+\left(-\left(-t\right)^{j}\right)^{p_{i}}u^{jp_{i}}v^{jq_{i}}\right)^{r_{i}}\right]^{a_{j}}\\
 & = & \prod_{j=1}^{n}\mu_{X}\left(-\left(-t\right)^{j},u^{j},v^{j}\right)^{a_{j}}.
\end{eqnarray*}
On the other hand, the equality
\begin{eqnarray*}
\mu_{\sym^{n}X}\left(t,u,v\right) & = & \sum_{\underline{n}\in\mathcal{P}_{n}}\prod_{j=1}^{n}\frac{1}{a_{j}!\,j^{a_{j}}}\mu_{X}\left(-\left(-t\right)^{j},u^{j},v^{j}\right)^{a_{j}}
\end{eqnarray*}
can also be deduced by extracting the $z^{n}$ term in Cheah's formula
\eqref{eq:Cheah-1} (see also \cite{FS}). The approach we follow
in this article is independent of Cheah's formula, and is designed
to obtain more concrete and elegant formulas for the case of varieties
whose cohomology is an exterior algebra generated in odd degree. 
\end{rem}
Now fix $G$ as a connected complex linear algebraic group with mixed
Hodge polynomial $\mu_{G}\left(t,x\right)=\prod_{i=1}^{m}\left(1+t^{2i-1}x^{i}\right)^{r_{i}}$.
By comparing this formula with the equality \ref{eq:MHPLAGs}, one
obtains
\begin{equation}
\sum_{n\geq0}\frac{1}{n!}\sum_{\sigma\in S_{n}}\prod_{i=1}^{m}\det\left(I_{n}+t^{2i-1}x^{i}M_{\sigma}\right)^{r_{i}}z^{n}=\prod_{p,k}\left(1-(-1)^{k}x^{p}t{}^{k}z\right)^{(-1)^{k+1}h^{k,p,p}(G)}\label{eq:CheahFLS}
\end{equation}
The notation here is the same as the ones used on that example. Let
us focus for a while in the particular case $\left(\mathbb{C}^{*}\right)^{r}$,
handled in \cite{FS}. The related mixed Hodge polynomial (MHP) is
given by $\mu_{\left(\mathbb{C}^{*}\right)^{r}}\left(t,x\right)=\left(1+tx\right)^{r}$,
so $\left(\mathbb{C}^{*}\right)^{r}$ is a round variety. In particular,
its Betti numbers $b_{k}\left(\left(\mathbb{C}^{*}\right)^{r}\right)$
coincide with its only non-trivial mixed Hodge numbers $h^{k,k,k}\left(\left(\mathbb{C}^{*}\right)^{r}\right)$.
Moreover, given the form of the MHP of $\left(\mathbb{C}^{*}\right)^{r}$,
we have 
\begin{equation}
\begin{array}{ccccc}
b_{k}\left(\left(\mathbb{C}^{*}\right)^{r}\right) & = & h^{k,k,k}\left(\left(\mathbb{C}^{*}\right)^{r}\right) & = & \binom{r}{k}\end{array}.\label{eq:comb}
\end{equation}
Then replacing in equality \ref{eq:CheahFLS}, we get the combinatorial
identity of \cite[Theorem 5.31]{FS}. For the case of a general linear
group $G$, not only we do not have an equality between mixed Hodge
and Betti numbers, we also do not have a nice combinatorial interpretation
as that of equations \ref{eq:comb}. But for some groups, such as
$G=GL\left(n,\mathbb{C}\right)$, we do manage to interpret the Betti
numbers in a combinatorial fashion, justifying the next result.
\begin{lem}
\label{lem:Bet}Let $G$ be a connected complex linear algebraic group
whose mixed Hodge polynomial is $\mu_{G}\left(t,x\right)=\prod_{i=1}^{m}\left(1+t^{2i-1}x^{i}\right)^{r_{i}}$.
Denote by $M_{\sigma}$ the permutation matrix (in some basis) associated
to a permutation $\sigma\in S_{n}$. Then 
\begin{eqnarray*}
\sum_{n\geq0}\sum_{\sigma\in S_{n}}\frac{z^{n}}{n!}\prod_{i=1}^{m}\det\left(I_{n}+t^{2i-1}M_{\sigma}\right)^{r_{i}} & = & \prod_{k}\left(1-(-1)^{k}t{}^{k}z\right)^{(-1)^{k+1}b_{k}(G)}
\end{eqnarray*}
where $b_{k}\left(G\right)$ are the Betti numbers of $G$.\end{lem}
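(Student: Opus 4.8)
The plan is to obtain this identity as the \emph{Betti-number specialization} of the mixed-Hodge identity \eqref{eq:CheahFLS}, which has already been established above by comparing Cheah's formula \eqref{eq:Cheah-1} with the determinantal expression \eqref{eq:MHPLAGs}. Recall that in \eqref{eq:CheahFLS} the variable $x$ stands for the product $uv$: since the cohomology of $G$ is an exterior algebra on generators lying in $H^{2i-1;i,i}(G,\mathbb{C})$ (Theorem \ref{thm:LAGs}, via Deligne's computation of the weights), every nonzero mixed Hodge number $h^{k,p,q}(G)$ satisfies $p=q$, so the two Hodge variables enter only through $x=uv$. Setting $x=1$ therefore amounts to setting $u=v=1$, i.e.\ to passing from the mixed Hodge polynomial to the Poincar\'e polynomial $P_{\sym^{n}G}(t)=\mu_{\sym^{n}G}(t,1,1)$, and the asserted identity is precisely the generating series of these Poincar\'e polynomials.

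First I would specialize the left-hand side of \eqref{eq:CheahFLS} at $x=1$: each factor $\det(I_{n}+t^{2i-1}x^{i}M_{\sigma})^{r_{i}}$ becomes $\det(I_{n}+t^{2i-1}M_{\sigma})^{r_{i}}$, turning the left-hand side into $\sum_{n\ge0}\frac{z^{n}}{n!}\sum_{\sigma\in S_{n}}\prod_{i=1}^{m}\det(I_{n}+t^{2i-1}M_{\sigma})^{r_{i}}$, which is exactly the left-hand side of the Lemma. This step is purely formal: $G$ is finite-dimensional, so only finitely many $r_{i}$ are nonzero, each coefficient of $z^{n}$ is a polynomial in $t$ and $x$, and substituting $x=1$ commutes with extracting $z^{n}$-coefficients. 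The substance lies on the right-hand side. Setting $x=1$ in $\prod_{p,k}(1-(-1)^{k}x^{p}t^{k}z)^{(-1)^{k+1}h^{k,p,p}(G)}$ collapses the dependence on $p$ inside each base, so all factors sharing a fixed $k$ combine into $(1-(-1)^{k}t^{k}z)$ raised to the exponent $(-1)^{k+1}\sum_{p}h^{k,p,p}(G)$. The key identity to record is then $\sum_{p}h^{k,p,p}(G)=b_{k}(G)$: the $k$-th Betti number is $\sum_{p,q}h^{k,p,q}(G)$, and by the observation above only the diagonal terms $p=q$ survive. This yields $\prod_{k}(1-(-1)^{k}t^{k}z)^{(-1)^{k+1}b_{k}(G)}$, matching the right-hand side of the Lemma.

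I do not anticipate a genuine obstacle here, since the statement is essentially a shadow of \eqref{eq:CheahFLS}. The only point that needs care, and which I would isolate as the single nontrivial input, is the justification that every nonzero Hodge number of $G$ is of type $(p,p)$; this is exactly what makes the passage $\sum_{p}h^{k,p,p}(G)=b_{k}(G)$ legitimate and is where the connectedness hypothesis and the exterior-algebra structure enter. Everything else is a bookkeeping collapse of the product over $p$ together with the routine formal-power-series remark above.
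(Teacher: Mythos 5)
Your proposal is correct and follows essentially the same route as the paper: specialize the identity \eqref{eq:CheahFLS} at $x=1$, collapse the product over $p$ into an exponent $\sum_{p}h^{k,p,p}(G)$, and identify that sum with $b_{k}(G)$. The only difference is that you explicitly justify the step $\sum_{p}h^{k,p,p}(G)=b_{k}(G)$ via the vanishing of off-diagonal Hodge numbers (Deligne's weight computation), which the paper leaves implicit.
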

\begin{proof}
The equality \ref{eq:CheahFLS} specified at $x=1$ becomes 
\begin{eqnarray*}
\sum_{n\geq0}\sum_{\sigma\in S_{n}}\frac{z^{n}}{n!}\prod_{i=1}^{m}\det\left(I_{n}+t^{2i-1}M_{\sigma}\right)^{r_{i}} & = & \prod_{p,k}\left(1-(-1)^{k}t{}^{k}z\right)^{(-1)^{k+1}h^{k,p,p}(G)}\\
 & = & \prod_{k}\left(1-(-1)^{k}t{}^{k}z\right)^{(-1)^{k+1}\sum_{p}h^{k,p,p}(G)}\\
 & = & \prod_{k}\left(1-(-1)^{k}t{}^{k}z\right)^{(-1)^{k+1}b_{k}(G)}
\end{eqnarray*}
as wanted.
\end{proof}
The Betti number $b_{k}\left(G\right)$ equals the coefficient of
$t^{k}$ in $P_{G}\left(t\right)=\prod_{i=1}^{m}\left(1+t^{2i-1}\right)^{r_{i}}$,
so our idea is to consider those groups where this coefficient can
be interpreted in a combinatoric fashion. As mentioned, this is the
case of $G=\left(\mathbb{C}^{*}\right)^{r}$, where the Betti numbers
equal the binomial coefficient $\binom{r}{k}$. Another such example
is $G=GL\left(m,\mathbb{C}\right)$. In this case, 
\begin{eqnarray*}
P_{GL\left(m,\mathbb{C}\right)}\left(t\right) & = & \prod_{i=1}^{m}\left(1+t^{2i-1}\right)
\end{eqnarray*}
and so $b_{k}\left(GL\left(m,\mathbb{C}\right)\right)$ equals the
number of partitions of $k$ with only odd parts, no parts being repeated
or surpassing $2m-1$. Denoting this number by $p_{odd}^{n}\left(k\right)$,
we obtain:
\begin{thm}
\label{thm:combgl}Let $m\in\mathbb{N}$. Then, for formal variables
$x,z$ (or for $z,x\in\mathbb{C}$ where the series and products converge),
we have:
\begin{eqnarray*}
\sum_{n\geq0}\sum_{\sigma\in S_{n}}\frac{z^{n}}{n!}\prod_{i=1}^{m}\det\left(I_{n}-t^{2i-1}M_{\sigma}\right) & = & \prod_{k}\left(1-t{}^{k}z\right)^{(-1)^{k+1}p_{odd}^{m}\left(k\right)}
\end{eqnarray*}
where, as above, $p_{odd}^{m}\left(k\right)$ stands for the number
of partitions of $k$ with only odd parts, no parts being repeated
or surpassing $2m-1$. \end{thm}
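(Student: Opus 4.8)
The plan is to obtain this identity as the specialization of Lemma \ref{lem:Bet} to the group $G=GL(m,\mathbb{C})$, followed by the substitution $t\mapsto -t$. First I would record that $GL(m,\mathbb{C})$ is a connected complex linear algebraic group homotopy equivalent to the unitary group $U(m)$, whose cohomology is the exterior algebra on one generator in each odd degree $2i-1$ for $i=1,\ldots,m$. In the notation of Lemma \ref{lem:Bet} this means $r_i=1$ for every $i$, so that $P_G(t)=\prod_{i=1}^m(1+t^{2i-1})$, exactly the Poincar\'e polynomial recalled just before the theorem statement.

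Next I would identify the Betti numbers combinatorially. Expanding the product $\prod_{i=1}^m(1+t^{2i-1})$, the coefficient of $t^k$ counts the subsets $S\subseteq\{1,3,\ldots,2m-1\}$ whose elements sum to $k$; equivalently, $b_k(G)$ is the number of partitions of $k$ into distinct odd parts, none exceeding $2m-1$, which is precisely $p_{odd}^m(k)$. Substituting $r_i=1$ and $b_k(G)=p_{odd}^m(k)$ into the conclusion of Lemma \ref{lem:Bet} yields
\[
\sum_{n\geq 0}\sum_{\sigma\in S_n}\frac{z^n}{n!}\prod_{i=1}^m\det\left(I_n+t^{2i-1}M_\sigma\right)=\prod_k\left(1-(-1)^k t^k z\right)^{(-1)^{k+1}p_{odd}^m(k)}.
\]

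Finally I would perform the substitution $t\mapsto -t$ and track the signs on each side. Since every exponent $2i-1$ is odd, one has $(-t)^{2i-1}=-t^{2i-1}$, so each factor $\det(I_n+t^{2i-1}M_\sigma)$ turns into $\det(I_n-t^{2i-1}M_\sigma)$, producing the left-hand side of the theorem. On the right-hand side, $(-1)^k(-t)^k=(-1)^k(-1)^k t^k=t^k$, so each base $1-(-1)^k t^k z$ becomes $1-t^k z$, while the exponents $(-1)^{k+1}p_{odd}^m(k)$ are unaffected, giving the claimed product. The only point requiring care is this bookkeeping of signs, ensuring that the parity of $2i-1$ and the identity $(-1)^k(-1)^k=1$ conspire to remove all the $(-1)^k$ factors; there is no substantive analytic difficulty, the identity being understood as an equality of formal power series in $z$ (or for $t,z$ in the region of convergence).
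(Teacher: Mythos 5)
Your proposal is correct and follows essentially the same route as the paper: the paper also obtains the theorem by specializing Lemma \ref{lem:Bet} to $G=GL(m,\mathbb{C})$ (where $P_G(t)=\prod_{i=1}^m(1+t^{2i-1})$, so $r_i=1$ and $b_k(G)=p_{odd}^m(k)$ counts partitions of $k$ into distinct odd parts at most $2m-1$). In fact you are slightly more careful than the paper, which passes from the $+t^{2i-1}$ and $(1-(-1)^k t^k z)$ of the lemma to the $-t^{2i-1}$ and $(1-t^k z)$ of the theorem without comment, whereas you make the substitution $t\mapsto -t$ and the resulting sign cancellations explicit.
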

\begin{rem}
Let $G$ be a connected linear algebraic group with Poincaré polynomial
$\prod_{i=1}^{m}\left(1+t^{2i-1}\right)^{r_{i}}$. Being the coefficient
of $\prod_{i=1}^{m}\left(1+t^{2i-1}\right)^{r_{i}}$, the Betti numbers
$b_{k}\left(G\right)$ can be interpreted in a combinatorial fahsion
for any connected linear algebraic group. Let $m\in\mathbb{N}$ and
$r^{m}=\left(r_{1},\cdots,r_{m}\right)\in\mathbb{N}_{0}^{m}$. Consider
the disjoint union $U_{m}^{r^{m}}=\bigsqcup_{i,j=1}^{m,r_{i}}\left\{ 2i-1\right\} $
and to each subset $L\subseteq U_{m}^{r^{m}}$, associate a number
$k_{L}=\sum_{j\in L}j$ ($0$ if $L=\emptyset$). Then $b_{k}\left(G\right)$
can be interpreted as the number of different subsets $L$ such that
$k_{L}=k$.\end{rem}

\end{document}